\newtheorem{Def}{Definition}[section]
\newtheorem{Thm}[Def]{Theorem}
\newtheorem{Prop}[Def]{Proposition}
\newtheorem{Cor}[Def]{Corollary}
\newtheorem{Ex}[Def]{Example}
\newcommand{\C}{\mathbb{C}}
\newcommand{\R}{\mathbb{R}}
\newcommand{\Z}{\mathbb{Z}}
\newcommand{\N}{\mathbb{N}}
\newcommand{\Q}{\mathbb{Q}}
\newcommand{\rk}{\mathop{\mathrm{rank}}\nolimits}
\begin{document}
\title{Trilinear forms and Chern classes of Calabi--Yau Threefolds}
\author{Atsushi Kanazawa \and P.M.H. Wilson}
\date{}
\subjclass[2010]{14J32, 14F45} 
\keywords{Calabi--Yau, trilinear form, cubic form, quadratic form, Chern classe, nef vector bundle}
\maketitle

\begin{abstract}
Let $X$ be a Calabi--Yau threefold and $\mu$ the symmetric trilinear form on the second cohomology group $H^{2}(X,\Z)$ defined by the cup product. 
We investigate the interplay between the Chern classes $c_{2}(X)$, $c_{3}(X)$ and the trilinear form $\mu$, and demonstrate some numerical relations between them. 
When the cubic form $\mu(x,x,x)$ has a linear factor over $\R$, some properties of the linear form and the residual quadratic form are also obtained. 
\end{abstract}

\section{Introduction}
This paper is concerned with the interplay of the symmetric trilinear form $\mu$ on the second cohomology group $H^{2}(X,\Z)$ 
and the Chern classes $c_{2}(X)$, $c_{3}(X)$ of a Calabi--Yau threefold $X$.  
It is an open problem whether or not the number of topological types of Calabi--Yau threefolds is bounded and 
the original motivation of this work was to investigate topological types of Calabi--Yau threefolds via the trilinear form $\mu$ on $H^{2}(X,\Z)$. 
The role that the trilinear form $\mu$ plays in the geography of $6$-manifolds is indeed prominent as 
C.T.C. Wall proved the following celebrated theorem by using surgery methods and homotopy information associated with these surgeries.
\begin{Thm}[C.T.C. Wall \cite{Wall}] \label{Wall}
Diffeomorphism classes of simply-connected, spin, oriented, closed $6$-manifolds $X$
with torsion-free cohomology correspond bijectively to isomorphism classes of systems of invariants consisting of 
\begin{enumerate}
\item free Abelian groups  $H^{2}(X,\Z)$ and $H^{3}(X,\Z)$, 
\item a symmetric trilinear from $\mu:H^{2}(X,\Z)^{\otimes 3}\rightarrow H^{6}(X,\Z)\cong \Z$ defined by $\mu(x,y,z)=x\cup y \cup z$, 
\item a linear map $p_{1}:H^{2}(X,\Z)\rightarrow  H^{6}(X,\Z)\cong \Z$ defined by $p_{1}(x)=p_{1}(X)\cup x$,
where $p_{1}(X) \in H^{4}(X,\Z)$ is the first Pontrjagin class of $X$, 
\end{enumerate}
subject to: for any $x,y \in H = H^{2}(X,\Z)$, 
$$
\mu(x,x,y)+\mu(x,y,y)\equiv 0 \pmod{2},\ \
4\mu(x,x,x) -p_{1}(x)\equiv 0\pmod{24}.
$$
The isomorphism $H^{6}(X,\Z)\cong \Z$ above is given by pairing the cohomology class with the fundamental class $[X]$ with natural orientation. 
\end{Thm}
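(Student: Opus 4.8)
The plan is to prove the correspondence in both directions: first that the listed data together with the two congruences are genuine invariants of the diffeomorphism type, and then that they form a complete system realizing every admissible value.

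For the invariance direction, the groups $H^{2}(X,\Z)$, $H^{3}(X,\Z)$ and the trilinear form $\mu$ are manifestly oriented-homotopy invariants, while $p_{1}$ is a diffeomorphism invariant since Pontrjagin classes depend on the smooth structure. The two congruences are then forced. As $X$ is spin and oriented, its Wu classes satisfy $v_{1}=0$ and $v_{2}=w_{2}=0$, so $Sq^{2}$ vanishes on $H^{4}(X;\Z/2)$; applying the Cartan formula to $\bar x \cup \bar y$ and using that $Sq^{1}$ kills the mod-$2$ reductions of integral classes (because the cohomology is torsion-free) yields $\mu(x,x,y)+\mu(x,y,y)\equiv 0 \pmod 2$. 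The second congruence is the integrality of the index of the Dirac operator twisted by a line bundle $L$ with $c_{1}(L)=x$: by Atiyah--Singer this index equals $\langle \hat A(X)\,\mathrm{ch}(L),[X]\rangle = \tfrac{1}{6}\mu(x,x,x)-\tfrac{1}{24}p_{1}(x)$, and clearing denominators gives $4\mu(x,x,x)-p_{1}(x)\equiv 0 \pmod{24}$.

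For completeness and realization I would argue by surgery. The intersection form on the middle group $H^{3}(X,\Z)$ is skew-symmetric and unimodular, hence admits a symplectic basis; representing a hyperbolic pair by embedded $3$-spheres, whose normal bundles are trivial because $X$ is spin and the self-intersections vanish, lets me split off connected summands diffeomorphic to $S^{3}\times S^{3}$, which carry the $H^{3}$ data but contribute nothing to $H^{2}$, $\mu$ or $p_{1}$. This reduces everything to the core case $H^{3}(X,\Z)=0$. There I would build a model directly from the algebraic data, realizing a basis of $H^{2}$ and the prescribed cup products by a handlebody (equivalently a plumbing or bundle-and-cell construction) whose $4$-handles are attached so as to produce the required values of $\mu$ and $p_{1}$; the two congruences are precisely what is needed for such an attachment to close up to a smooth spin $6$-manifold. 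Finally, given two manifolds with identical invariants, I would compare them through the surgery exact sequence $L_{7}(\Z)\to \mathcal S(X)\to [X,G/O]\to L_{6}(\Z)$: since $L_{7}(\Z)=0$ the structure set injects into the normal invariants, so after matching normal invariants the two manifolds represent the same structure, hence are $h$-cobordant, and the $h$-cobordism theorem (valid as $\dim\geq 5$ and $X$ is simply connected) upgrades this to a diffeomorphism.

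The main obstacle is exactly this last surgery-and-smoothing step in the core case. Realizing an abstract trilinear form by the attaching maps of $4$-handles, and then showing that two realizations with the same $(\mu,p_{1})$ differ by a sequence of surgeries with vanishing obstruction, requires controlling the normal invariants in $[X,G/O]$ and checking that the middle-dimensional surgery obstruction, an Arf/Kervaire-type class living in $L_{6}(\Z)=\Z/2$, is pinned down by the given data. Keeping track of the spin structure and the framings throughout these surgeries, so that the homotopy-theoretic bookkeeping matches the geometric handle moves, is the delicate heart of the argument.
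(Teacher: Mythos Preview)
The paper does not give its own proof of this statement: Theorem~\ref{Wall} is quoted from Wall's original paper \cite{Wall} as background, with no argument supplied. So there is nothing in the present paper to compare your proposal against.

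That said, your sketch is broadly in the spirit of Wall's actual proof---invariance via characteristic-class and index-theoretic identities, splitting off $S^{3}\times S^{3}$ summands to reduce to $H^{3}=0$, and then a surgery/handlebody argument to realize and classify the core case. Two cautions are in order. First, your realization step (``build a model directly from the algebraic data \ldots by a handlebody'') is where essentially all the content lies, and you have not indicated how the two congruences enter concretely as the obstructions to closing up; in Wall's argument this is done by an explicit construction rather than by invoking the surgery exact sequence abstractly. Second, your appeal to the surgery exact sequence $L_{7}(\Z)\to \mathcal{S}(X)\to [X,G/O]\to L_{6}(\Z)$ is anachronistic relative to Wall's 1966 paper and, more importantly, does not by itself finish the job: you still need to show that two manifolds with the same $(\mu,p_{1})$ have the same normal invariant in $[X,G/O]$, which is exactly the step you flag as the ``main obstacle'' and leave open. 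As written, then, your proposal is an outline with the decisive step acknowledged but not carried out.
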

At present the classification of trilinear forms, which is as difficult as that of diffeomorphism classes of $6$-manifolds, is unknown.  
In the light of the essential role of the K3 lattice in the study of K3 surfaces, we would like to propose the following question:
\textit{what kind of trilinear forms $\mu$ occur on Calabi--Yau threefolds?} 
The quantized version of the trilinear forms, known as Gromov--Witten invariants or A-model Yukawa couplings, are also of interest to both mathematicians and physicists. 
One advantage of working with complex threefolds is that we can reduce  our questions to the theory of complex surfaces by considering linear systems of divisors. 
Furthermore, for Calabi--Yau threefolds $X$, 
the second Chern class $c_{2}(X)$ and the K\"{a}hler cone $\mathcal{K}_{X}$ turn out to encode important information about $\mu$ (see \cite{Wil2, Wil4} for details).  
One purpose of this paper is to take the first step towards an investigation on 
how the Calabi--Yau structure affects the trilinear form $\mu$ and the Chern classes of the underlying manifold. \\

It is worth mentioning some relevant work from elsewhere. Let $(X,H)$ be a polarized Calabi--Yau threefold. 
A bound for the value $c_{2}(X)\cup H$ in terms of the triple intersection $H^{3}$ is well-known (see for example \cite{Wil3})  
and hence there are only finitely many possible Hilbert polynomials $\chi(X,\mathcal{O}_{X}(nH))=\frac{H^{3}}{6}n^{3} +\frac{c_{2}(X)\cup H}{12}n$ for such $(X,H)$.   
By the footnote below 
and standard Hilbert scheme theory, we know that the Calabi--Yau threefold $X$ belongs to a finite number of families. 
This implies that once we fix a positive integer $n \in \N$, there are only finitely many diffeomorphism classes of polarized Calabi--Yau threefolds $(X,H)$ with $H^{3}=n$,  
and in particular only finitely many possibilities for the Chern classes $c_{2}(X)$ and $c_{3}(X)$ of $X$.  
Explicit bounds on the Euler characteristic $c_{3}(X)$ in terms of $H^{3}$ for certain types of Calabi--Yau threefolds are given in \cite{BH, CK}; 
the idea of this article is to record the following simple explicit result which holds in general, and which may be  useful for both mathematicians and physicists. 
\begin{Thm} \label{MAIN} 
Let $(X,H)$ be a very amply polarized Calabi--Yau threefold, i.e. $x=H$ is a very ample divisor on $X$.  
Then the following inequality holds:
$$
-36\mu(x,x,x)-80\le \frac{c_{3}(X)}{2}=h^{1,1}(X)-h^{2,1}(X)\le 6\mu(x,x,x)+40.
$$
Moreover, the above inequality can be sharpened by replacing the left hand side by $-80$, $-180$ and right hand side by $28$, $54$ when $\mu(x,x,x)=1,3$ respectively 
\footnote{It is shown by K. Oguiso and T. Peternell \cite{OP} that we can always pass from an ample divisor $H$ on a Calabi--Yau threefold to a very ample one $10H$. }.  
\end{Thm}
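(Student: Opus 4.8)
The plan is to slice $X$ with its very ample divisor and push the problem into the theory of surfaces of general type. Since $x=H$ is very ample, Bertini gives a smooth member $S\in|H|$, and as $X$ is Calabi--Yau the adjunction formula yields $K_S=(K_X+H)|_S=H|_S$. Hence $K_S$ is ample, so $S$ is a minimal surface of general type with $K_S^2=H^3=\mu(x,x,x)$, and the Lefschetz hyperplane theorem forces $q(S)=\tfrac12 b_1(S)=\tfrac12 b_1(X)=0$. Riemann--Roch on $X$ together with Kodaira vanishing gives $\chi(\mathcal{O}_S)=\tfrac{H^3}{6}+\tfrac{c_2(X)\cdot H}{12}$ and (from the normal bundle sequence) $c_2(S)=c_2(X)\cdot H+H^3$, so that $p_g(S)=\chi(\mathcal{O}_S)-1$ and $b_2(S)=c_2(S)-2$. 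I would record all of these invariants at the outset.

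The engine for the inequalities is then the classical numerology of minimal surfaces of general type. Noether's inequality $K_S^2\ge 2p_g(S)-4$ translates into $c_2(X)\cdot H\le 4H^3+36$, while the Bogomolov--Miyaoka--Yau inequality $K_S^2\le 3c_2(S)$ and Miyaoka's semipositivity give $c_2(X)\cdot H\ge 0$. For the upper bound I would use that the Lefschetz hyperplane theorem makes $H^2(X,\Z)\hookrightarrow H^2(S,\Z)$, so $h^{1,1}(X)=b_2(X)\le b_2(S)$; combined with $h^{2,1}(X)\ge 0$ this gives
$$\frac{c_3(X)}{2}=h^{1,1}(X)-h^{2,1}(X)\le b_2(S)=c_2(X)\cdot H+H^3-2\le 5H^3+34\le 6\mu(x,x,x)+40 .$$
Equivalently, one may run the argument through the identity $c_3(X)=4H^3+2\,c_2(X)\cdot H-\deg X^{\vee}$, where $\deg X^{\vee}=\int_X c_3\big(\mathcal{P}^1(\mathcal{O}_X(H))\big)$ is the number of nodes in a Lefschetz pencil of sections, using $\deg X^{\vee}\ge 0$ together with the bound on $c_2(X)\cdot H$.

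The lower bound is the genuine difficulty, and I expect it to be the main obstacle. By the same identity it reads
$$\frac{c_3(X)}{2}=2H^3+c_2(X)\cdot H-\tfrac12\deg X^{\vee}\ge 2H^3-\tfrac12\deg X^{\vee},$$
so it is equivalent to an effective estimate $\deg X^{\vee}\le 76H^3+160$, i.e. to an upper bound $h^{2,1}(X)\le 36H^3+81$. Here the Lefschetz theorem is useless, since $H^3(S)=0$: the middle cohomology $H^3(X)$ is exactly the part invisible to a single hyperplane section. To reach it I would take a Lefschetz pencil, blow up its base curve $B\in|K_S|$ (of genus $g(B)=H^3+1$) to obtain a fibration $\pi\colon\widetilde X\to\mathbb{P}^1$ with surface fibres, and study $H^3(\widetilde X)=H^1(\mathbb{P}^1,R^2\pi_*\Q)$ via the variation of Hodge structure carried by the vanishing cohomology $H^2(S)_{\mathrm{van}}$. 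Estimating $h^{2,1}(\widetilde X)$ then amounts to bounding the cohomology of the Hodge bundles $\overline{\mathcal{H}^{p,q}}$ on $\mathbb{P}^1$, whose ranks $p_g(S),h^{1,1}(S)$ are already controlled by the surface inequalities and whose degrees are constrained by Arakelov-type inequalities; one finally descends via $h^{2,1}(X)=h^{2,1}(\widetilde X)-g(B)$. Turning this into the explicit linear bound, rather than a mere finiteness statement, is where the real work lies.

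The sharpenings for $\mu(x,x,x)=H^3=1,3$ would come from the fine classification of minimal surfaces of general type with $K_S^2=1$ and $K_S^2=3$: in these cases Noether forces $p_g(S)\le 2$ and $p_g(S)\le 3$ respectively, which pins $c_2(X)\cdot H$ and $b_2(S)$ to small explicit values and tightens both inequalities to the stated constants.
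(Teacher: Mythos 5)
Your slicing set-up and your upper bound are essentially sound: the invariants of $S\in|H|$ are computed correctly, Noether gives $c_2(X)\cup H\le 4\mu(x,x,x)+36$, and the Lefschetz-hyperplane argument $h^{1,1}(X)=b_2(X)\le b_2(S)=c_2(X)\cup H+\mu(x,x,x)-2$ together with $h^{2,1}(X)\ge 0$ does yield $c_3(X)/2\le 5\mu(x,x,x)+34\le 6\mu(x,x,x)+40$. Interestingly, this is a genuinely different (and more elementary) route than the paper's, which never invokes Lefschetz for the upper bound; the paper instead deduces both bounds from positivity of Chern classes of a nef vector bundle, and uses the Castelnuovo inequality $K_S^2\ge 3p_g(S)-7$ for canonical surfaces (giving $c_2(X)\cup H\le 2\mu(x,x,x)+40$) alongside Noether.

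The genuine gap is the lower bound, which you yourself flag as unfinished: reducing it to $\deg X^{\vee}\le 76H^3+160$, equivalently to an effective bound $h^{2,1}(X)\lesssim 36H^3+81$, and then gesturing at a variation-of-Hodge-structure/Arakelov program is not a proof, and it is precisely here that all the content of the theorem lies (the constants $-36\mu-80$, and $-80$, $-180$ in the sharpened cases, are lower-bound constants). The idea you are missing is that very ampleness of $H$ makes the twisted cotangent bundle $\Omega_X(2H)$ globally generated, hence nef: restricting the Euler sequence of $\mathbb{P}(V)$, $V=H^0(X,\mathcal{O}_X(H))$, and using the Koszul complex exhibits $\Omega_{\mathbb{P}(V)}(2H)$ as a quotient of a trivial bundle, and $\Omega_X(2H)$ as a quotient of its restriction. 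The Demailly--Peternell--Schneider extension of Fulton--Lazarsfeld positivity then gives $\int_X P_{\lambda}(c(E))\wedge\omega^{3-r}\ge 0$ for every Schur polynomial $P_\lambda$ of the nef bundle $E=\Omega_X(2H)$. With $c_1(X)=0$ one computes $c_1(E)=6H$, $c_2(E)=c_2(X)+12H^2$, $c_3(E)=-c_3(X)+2c_2(X)\cup H+8H^3$, and the inequality $P_{(2,1)}(c(E))=c_1(E)\cup c_2(E)-c_3(E)\ge 0$ unwinds to
$$
64\mu(x,x,x)+4\,c_2(X)\cup x+c_3(X)\ge 0 ,
$$
which combined with $c_2(X)\cup x\le 2\mu(x,x,x)+40$ gives $c_3(X)/2\ge -36\mu(x,x,x)-80$. (Similarly $P_{(1,1)}\ge 0$ gives $c_2(X)\cup x\le 24\mu(x,x,x)$, which is what produces the sharpened constants $-80$ and $28$ for $\mu=1$; your proposed surface-classification refinement for $K_S^2=1,3$ only tightens the $c_2$ bound and cannot produce any lower bound on $c_3$ at all.) Nothing in your Lefschetz-pencil framework supplies an inequality pointing in this direction, so as it stands the proposal proves only half of the statement.
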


In the last section,  we study the cubic form $\mu(x,x,x):H^{2}(X,\Z)\rightarrow \Z$ for a K\"{a}hler threefold $X$, assuming that $\mu(x,x,x)$ has a linear factor over $\R$.  
Some properties of the linear form and the residual quadratic form on $H^{2}(X,\R)$ are obtained;  
possible signatures of the residual quadratic form are determined under a certain condition (for example $X$ is a Calabi--Yau threefold).   

\section{Bound for $c_{2}(X)\cup H$} 

In this section, we collect some properties of the trilinear form and the second Chern classes of a Calabi--Yau threefold. 
We will always work over the field of complex numbers $\C$. \\

Let $X$ be a smooth K\"{a}hler threefold. 
Throughout this paper, we write $c_{i}(X)=c_{i}(TX)$ the $i$-th Chern class of the tangent bundle $TX$.  
K\"{a}hler classes constitute an open cone $\mathcal{K}_{X}\subset H^{1,1}(X,\C)\cap H^{2}(X,\R)$, called the  K\"{a}hler cone. 
The closure $\overline{\mathcal{K}_{X}}$  then consists of nef classes and hence is called the nef cone. 
The second Chern class $c_{2}(X) \in H^{4}(X, \Z)$ defines a linear function on $H^{2}(X,\R)$. 
Under the assumption that $X$ is minimal (for instance a Calabi--Yau threefold), 
results of Y. Miyaoka \cite{Miy} imply  that for any nef class $x \in \overline{\mathcal{K}_{X}}$, we have  $c_{2}(X)\cup x \ge 0$. \\

Let $X$ be a smooth complex threefold. 
We define a symmetric trilinear form $\mu:H^{2}(X,\Z)^{\otimes3} \rightarrow H^{6}(X,\Z)\cong \Z$ 
by setting $\mu(x,y,z)=x\cup y \cup z$ for $x,y,z \in H^{2}(X,\Z)$. 
By small abuse of notation we also use $\mu$ for its scalar extension.

\begin{Def}
A Calabi--Yau threefold $X$ is a complex projective smooth threefold with trivial canonical bundle $K_{X}$ such that $H^{1}(X, \mathcal{O}_{X})=0$. 
\end{Def}
For a Calabi--Yau threefold $X$, the exponential exact sequence gives an identification $\mathrm{Pic}(X)=H^{1}(X, \mathcal{O}_X ^{\times}) \cong H^{2}(X,\mathbb{Z})$. 
The divisor class $[D]$ is then identified with the first Chern class $c_{1}(\mathcal{O}_{X}(D))$ of the associated line bundle $\mathcal{O}_{X}(D)$. 
In the following we freely use this identification. \\

The Hirzebruch--Riemann--Roch theorem for a Calabi--Yau threefold $X$ states that $\chi(X,\mathcal{O}_{X}(D))=\frac{1}{6} \mu(x,x,x)+\frac{1}{12}c_{2}(X)\cup x$ for any $x=D \in H^{2}(X,\Z)$. 
Therefore 
$$
2\mu(x,x,x)+c_{2}(X)\cup x\equiv0 \pmod{12}. 
$$
In particular, $c_{2}(X)\cup x$ is an even integer for any $x \in H^{2}(X,\Z)$. 
In the case when the cohomology is torsion-free, this also follows from the fact $p_{1}(X)=-2c_{2}(X)$ and  Wall's Theorem \ref{Wall}. 
The role played by $p_{1}(X)$ in his theorem is replaced by $c_{2}(X)$ for Calabi--Yau threefolds. \\ 

For a compact complex surface $S$, the geometric genus $p_{g}(S)$ is defined by $p_{g}(S)=\dim_{\C}H^{0}(S,\Omega_{S}^{2})$.  
The basic strategy we take in the following is to reduce the question on Calabi--Yau threefolds to compact complex surface theory by considering linear systems of divisors. 

\begin{Prop} \label{NC} 
Let $X$ be a Calabi--Yau threefold. 
\begin{enumerate}
\item For any ample $x=H \in \mathcal{K}_{X}\cap H^{2}(X,\Z)$ with $|H|$ free and $\dim_{\C}|H|\ge 2$, the following inequalities hold. 
$$
\frac{1}{2}c_{2}(X)\cup x  \le 2\mu(x,x,x)+C
$$
where $C=18$ when $\mu(x,x,x)$ even and $C=15$ otherwise. 
\item If furthermore the canonical map $\Phi_{|K_{H}|}:H\rightarrow \mathbb{P}^{|K_{H}|}$ (which is given by the restriction of the map $\Phi_{|H|}$ to $H$) is birational onto its image, the following inequality holds. 
$$
\frac{1}{2}c_{2}(X)\cup x  \le \mu(x,x,x)+20 
$$
\item If furthermore the image of the canonical map in (2) is generically an intersection of quadrics, the following inequality holds.
$$
c_{2}(X)\cup x \le \mu(x,x,x)+48
$$
\end{enumerate}
\end{Prop}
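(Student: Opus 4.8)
The plan is to intersect $X$ with a general member of $|H|$ and transport the whole question into the geography of surfaces of general type. Since $|H|$ is base-point free, Bertini's theorem produces a smooth general member $S\in|H|$, and $S$ is irreducible because $H$ is ample (so $\Phi_{|H|}$ has three-dimensional image). By adjunction and $K_X\cong\mathcal{O}_X$ one has $K_S=(K_X+S)|_S=H|_S$, so $K_S$ is ample and $S$ is a \emph{minimal} surface of general type with $K_S^2=H^3=\mu(x,x,x)$. Two cohomological facts, both flowing from the Calabi--Yau hypotheses, feed the argument. The ideal sheaf sequence $0\to\mathcal{O}_X(-H)\to\mathcal{O}_X\to\mathcal{O}_S\to0$ together with $H^1(X,\mathcal{O}_X)=0$ and Kodaira vanishing, which kills $H^2(X,\mathcal{O}_X(-H))\cong H^1(X,\mathcal{O}_X(H))^{\vee}$, gives the irregularity $q(S)=h^1(S,\mathcal{O}_S)=0$; twisting the same sequence by $\mathcal{O}_X(H)$ and using $K_S=H|_S$ identifies $p_g(S)=h^0(S,K_S)=h^0(X,\mathcal{O}_X(H))-1=\dim_{\C}|H|$. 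Hence $\chi(\mathcal{O}_S)=1+p_g(S)$.

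Next I would record the identity linking $c_2(X)\cup x$ to these surface invariants. From the normal bundle sequence $0\to T_S\to T_X|_S\to N_{S/X}\to0$, with $N_{S/X}=\mathcal{O}_S(H)\cong K_S$ and $c_1(X)=0$, a Whitney-sum computation gives $c_2(T_X|_S)=c_2(S)-K_S^2$, whence $c_2(X)\cup x=c_2(S)-K_S^2$; combining this with Noether's formula $c_2(S)=12\chi(\mathcal{O}_S)-K_S^2$ yields
$$
\tfrac{1}{2}\,c_2(X)\cup x=6\chi(\mathcal{O}_S)-\mu(x,x,x)=6+6\,p_g(S)-\mu(x,x,x),
$$
which in particular recovers the congruence $2\mu(x,x,x)+c_2(X)\cup x\equiv0\pmod{12}$. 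Thus each of the three assertions is \emph{equivalent} to a bound on $p_g(S)$ in terms of $K_S^2=\mu(x,x,x)$: part (1) to $p_g\le\tfrac12 K_S^2+2$, part (2) to $p_g\le\tfrac13 K_S^2+\tfrac73$, and part (3) to $p_g\le\tfrac14 K_S^2+3$.

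These are precisely the classical geography inequalities for a minimal surface of general type, applied under the successive hypotheses on the canonical map $\Phi_{|K_S|}$, which here is the restriction of $\Phi_{|H|}$. Part (1) is Noether's inequality $K_S^2\ge2p_g-4$; when $\mu(x,x,x)$ is odd the sharpening to $C=15$ is automatic, since the Noether line $K_S^2=2p_g-4$ forces $K_S^2$ even, so an odd value satisfies $K_S^2\ge2p_g-3$. Part (2) is Castelnuovo's inequality $K_S^2\ge3p_g-7$, which holds exactly when $\Phi_{|K_S|}$ is birational onto its image; I would obtain it by passing to a general $C\in|K_S|$, which is smooth and irreducible because $|K_S|$ is base-point free (being the restriction of $|H|$) and has two-dimensional image. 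On $C$ one has $\deg(K_S|_C)=K_S^2$, $g(C)=K_S^2+1$ and $h^0(C,K_S|_C)=p_g-1$, so the image of $C$ is a nondegenerate curve of degree $\mu(x,x,x)$ and genus $\mu(x,x,x)+1$ in $\mathbb{P}^{p_g-2}$, and Castelnuovo's bound for the genus of a nondegenerate curve in projective space forces $p_g-2\le\tfrac13(\mu(x,x,x)+1)$. The cruder estimate $h^0(C,K_S|_C)-1\le\tfrac12\deg(K_S|_C)$, i.e.\ Clifford's theorem, already reproves part (1) in this range, the integrality of $h^0$ again yielding the odd refinement.

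The delicate case, and where I expect the real work to lie, is part (3). Here the extra hypothesis that the canonical image is generically an intersection of quadrics must be turned into a genuine improvement of the Castelnuovo estimate: a general hyperplane section of the image is then a curve cut out by quadrics, and one must show that this excludes the extremal configurations—curves lying on rational normal scrolls—that saturate $K_S^2=3p_g-7$, pushing the bound to $K_S^2\ge4p_g-12$. Pinning down the precise meaning of ``generically an intersection of quadrics'' and extracting the corresponding degree--genus inequality for the section curve is the crux; by comparison the remaining ingredients (smoothness and irreducibility of the general members, and birationality of the restricted canonical map) are routine.
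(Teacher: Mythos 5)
Your strategy for parts (1) and (2) is the paper's own, and it is correct. You restrict to a smooth irreducible $S\in|H|$, identify $K_S=H|_S$ (so $S$ is a minimal surface of general type with $K_S^2=\mu(x,x,x)$), and convert $c_2(X)\cup x$ into surface invariants. Your identity $\tfrac12 c_2(X)\cup x=6+6p_g(S)-\mu(x,x,x)$, derived from the normal-bundle sequence plus Noether's formula on $S$, is exactly the paper's relation $p_g(S)=\tfrac16\mu(x,x,x)+\tfrac1{12}c_2(X)\cup x-1$, which the paper instead obtains from Hirzebruch--Riemann--Roch on $X$ together with Kodaira vanishing and $H^1(X,\mathcal{O}_X)=0$; the two derivations are interchangeable. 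Given this, part (1) is Noether's inequality with the parity refinement for odd $K_S^2$ (same as the paper), and part (2) is Castelnuovo's inequality $K_S^2\ge 3p_g(S)-7$ for minimal canonical surfaces, which the paper simply quotes; your hyperplane-section argument (a general $C\in|K_S|$ maps birationally to a nondegenerate curve of degree $\mu(x,x,x)$ and geometric genus $\mu(x,x,x)+1$ in $\mathbb{P}^{p_g-2}$, and Castelnuovo's genus bound then forces $K_S^2\ge 3p_g-7$) is sound, so here you actually supply more detail than the paper does. One slip of wording: Castelnuovo's inequality is not an ``exactly when''; birationality of $\Phi_{|K_S|}$ is a sufficient hypothesis, not a characterization. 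This does not affect the argument.

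Part (3), however, is a genuine gap, and you flag it yourself. You correctly reduce the claim to the inequality $K_S^2\ge 4p_g(S)-12$ for a minimal canonical surface whose canonical image is generically an intersection of quadrics, but that inequality is the entire content of part (3): it is not a routine tightening of the scroll analysis behind Castelnuovo's bound, and ``excluding the extremal configurations'' is a plan, not a proof. The paper closes this step by citing M. Reid's theorem \cite{MR}, which states $K_S^2\ge 4p_g(S)+q(S)-12$ under precisely the hypothesis in question (``generically an intersection of quadrics'' meaning the image is one component of the intersection of all quadrics through it), and then notes $q(S)=0$ --- a fact you did establish via the ideal-sheaf sequence. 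So your write-up of (3) is incomplete as it stands: to finish it you must either invoke Reid's result or reproduce his argument, which is a substantial piece of surface theory in its own right rather than the ``extraction of a degree--genus inequality'' you sketch.
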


\begin{proof}
(1) By Bertini's theorem, a general member of the complete linear system $|H|$ is irreducible and gives us a smooth compact complex surface $S \subset X$. 
Applying the Hirzebruch--Riemann--Roch theorem and the Kodaira vanishing theorem to the ample line bundle $\mathcal{O}_{X}(H)$, 
we can readily show that the geometric genus $p_{g}(S)=\frac{1}{6}\mu(x,x,x)+\frac{1}{12}c_{2}(X)\cup x-1$. 
Since $K_{S}$ is ample, the surface $S$ is a minimal surface of general type. 
Then the Noether's inequality $\frac{1}{2}K_{S}^{2}\ge p_{g}(S)-2$ yields the desired two equalities depending on the parity of $K_{S}^{2}=\mu(x,x,x)$.\\
(2) The proof is almost identical to the first case. 
Since the surface $S$ obtained above is a minimal canonical surface, i.e. the canonical map $\Phi_{|K_{S}|}:S\rightarrow \mathbb{P}^{|K_{S}|}$ is birational onto  its image,  
the Castelnuovo inequality for minimal canonical surfaces $K_{S}^{2}\ge 3p_{g}(S)-7$ yields the inequality.\\
(3) We say that an irreducible variety $S\subset \mathbb{P}^{p_{g}-1}$ is generically an intersection of quadrics 
if $S$ is one component of the intersection of all quadrics through $S$. 
In this case, M. Reid \cite{MR} improved the above inequality to $K_{S}^{2}\ge 4p_{g}(S)+q(S)-12$. 
The irregularity $q(S)=\dim_{\C}H^{1}(S,\mathcal{O}_{S})=0$ in our case. 
\end{proof}

If $x\in \mathcal{K}_{X}$ is very ample, the conditions in Proposition \ref{NC} (1) and (2) are automatically satisfied. 
The first two inequalities are optimal in the sense that equalities hold for the complete intersection Calabi--Yau threefolds $\mathbb{P}_{(1^{4},4)}\cap(8)$ and $\mathbb{P}^{4}\cap (5)$. \\

It is worth noting that polarized Calabi--Yau threefolds $(X,H)$ with $\Delta$-genus $\Delta(X, H)\le 2$ are classified by K. Oguiso \cite{O} 
and it is observed by the second author \cite{Wil3} that the inequality $c_{2}(X)\cup H \le 10 H^{3}$ holds for those with $\Delta(X,H)>2$. 
R. Schimmrigk's experimental observation \cite{Sc} however conjectures the existence of a better linear upper bound of $c_{2}(X)$  
for Calabi--Yau hypersurfaces in weighted projective spaces.

\begin{Prop}
The surface $S$ in the proof of Proposition \ref{NC} is a minimal surface of general type with non-positive second Segre class $s_{2}(S)$. 
$s_{2}(S)$ is negative if and only if $c_{2}(X)$ is not identically zero. 
\end{Prop}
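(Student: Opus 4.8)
The minimality and general type of $S$ are already established inside the proof of Proposition \ref{NC}: since $K_X$ is trivial, adjunction gives $K_S=(K_X+S)|_S=\mathcal{O}_S(H)$, which is ample, so $S$ carries no $(-1)$-curves and is a minimal surface of general type. The substance of the proposition is therefore an identity relating $s_2(S)$ to the second Chern class of $X$, and the plan is to establish the clean formula
$$
s_2(S)=-\,c_2(X)\cup x ,
$$
from which both assertions follow at once.

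To obtain this I would restrict the tangent sequence of the embedding $S\subset X$, namely $0\to TS\to TX|_S\to N_{S/X}\to 0$, and use multiplicativity of the total Chern class, $c(TX|_S)=c(TS)\,c(N_{S/X})$. Two simplifications make the computation short. First, $c_1(X)=0$ and $c_3(X)|_S=0$ (the latter because $H^6(S)=0$), so $c(TX|_S)=1+c_2(X)|_S$. Second, adjunction identifies the normal bundle as $N_{S/X}=\mathcal{O}_S(K_S)$, so $c(N_{S/X})=1+K_S$. Expanding $c(TS)=c(TX|_S)\,c(N_{S/X})^{-1}=(1+c_2(X)|_S)(1-K_S+K_S^2)$ and discarding the contributions in $H^6(S)=0$ yields $c_1(S)=-K_S$ and $c_2(S)=K_S^2+c_2(X)|_S$. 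Since $s_2(S)=c_1(S)^2-c_2(S)$ is the degree-two part of $c(TS)^{-1}$, integrating over $S$ and using $\int_S c_2(X)|_S=c_2(X)\cup[S]=c_2(X)\cup x$ gives the displayed formula.

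Non-positivity is then immediate: because $x=H$ is ample, hence nef, the Miyaoka inequality recalled above gives $c_2(X)\cup x\ge 0$, so $s_2(S)=-\,c_2(X)\cup x\le 0$. For the final equivalence, the formula reduces the claim to showing that $c_2(X)\cup x=0$ holds precisely when $c_2(X)$ vanishes as a linear functional on $H^2(X,\R)$; one direction is trivial. For the converse I would argue by perturbation within the K\"ahler cone: if $c_2(X)\cup x=0$, then for every $D\in H^2(X,\R)$ (which equals $H^{1,1}(X,\R)$ for a Calabi--Yau threefold) the classes $x\pm tD$ lie in the open cone $\mathcal{K}_X$ for all sufficiently small $t>0$, since $x\in\mathcal{K}_X$; applying Miyaoka's inequality to both gives $\pm\,t\,(c_2(X)\cup D)\ge 0$, whence $c_2(X)\cup D=0$. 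As $D$ is arbitrary, $c_2(X)$ is identically zero, and Poincar\'e duality identifies this with the vanishing of $c_2(X)$ in $H^4(X,\R)$.

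The step I expect to require the most care is this last one, namely upgrading Miyaoka's non-strict inequality on the closed nef cone $\overline{\mathcal{K}_X}$ to the sharp dichotomy; but the openness of $\mathcal{K}_X$ around the ample class $x$ makes the two-sided perturbation argument go through cleanly, so no deeper input than the inequality $c_2(X)\cup(\,\cdot\,)\ge 0$ on nef classes is needed.
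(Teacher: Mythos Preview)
Your argument is correct and follows essentially the same route as the paper: compute $c_1(S)$ and $c_2(S)$ from the tangent/normal bundle sequence to obtain $s_2(S)=-c_2(X)\cup x$, invoke Miyaoka for non-positivity, and use openness of $\mathcal{K}_X$ around the ample class $x$ for the equivalence. The paper's proof is terser but identical in content; your explicit perturbation step $x\pm tD\in\mathcal{K}_X$ is exactly what the paper means by ``$\mathcal{K}_X\subset H^2(X,\R)$ is an open cone.''
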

\begin{proof}
Let $i:S \hookrightarrow X$ be the inclusion and we identify $H^{4}(S,\Z)\cong \Z$. 
A simple computation shows $c_{1}(S)=-i^{*}(x)$ and $c_{2}(S)=\mu(x,x,x)+c_{2}(X)\cup x$. 
Since $x \in \mathcal{K}_{X}$, $s_{2}(S)=c_{1}(S)^{2}-c_{2}(S)=-c_{2}(X)\cup x \le 0$ by the result of Y. Miyaoka \cite{Miy}. 
The second claim follows from the fact that $\mathcal{K}_{X} \subset H^{2}(X,\R)$ is an open cone.   
\end{proof}
If $X$ is a Calabi--Yau threefold and the linear form $c_{2}(X)$ is identically zero, it is well known that $X$ is the quotient of an Abelian threefold by a finite group acting freely on it.

\section{Bound for $c_{3}(X)$}
In this section, we apply to smooth projective threefolds the Fulton--Lazarsfeld  theory for nef vector bundles developed by J.P. Demailly, T. Peternell and M. Schneider \cite{DPS}.  
This gives us several inequalities among  Chern classes and cup products of certain cohomology classes. 
When $X$ is a Calabi--Yau threefold, these inequalities simplify and provide us with effective bounds for the  Chern classes.\\

Recall that a vector bundle $E$ on a complex manifold $X$ is called nef if the Serre line bundle $\mathcal{O}_{\mathbb{P}(E)}(1)$ on the projectivized bundle $\mathbb{P}(E)$ is nef. 

\begin{Thm}[J.P. Demailly, T. Peternell, M. Schneider \cite{DPS}] \label{DPS}
Let $E$ be a nef vector bundle over a complex manifold $X$ equipped with a K\"{a}hler class $\omega_{X} \in \mathcal{K}_{X}$. 
Then for any Schur polynomial $P_{\lambda}$ of degree $2r$ and any complex submanifold $Y$ of dimension $d$, we have $\int_{Y}P_{\lambda}(c(E))\wedge \omega_{X}^{d-r} \ge 0$.
\end{Thm}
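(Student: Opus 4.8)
The statement is the Fulton--Lazarsfeld positivity theorem for Schur polynomials, transported to nef bundles in the K\"ahler category. The plan is to reduce the inequality to the positivity of an intersection number of \emph{nef line bundle} classes, and then to deduce the latter from the characterization of nef classes as limits of K\"ahler classes. Throughout I would use three stability properties of nefness on a compact K\"ahler manifold: it is preserved under pullback, under passage to a quotient bundle, and under tensor and symmetric powers. In particular $\det F = \wedge^{\mathrm{top}}F$ is a nef line bundle whenever $F$ is a nef vector bundle, since it is a quotient of a tensor power of $F$, and tensor powers of nef bundles are nef.

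The basic positivity lemma is the following. If $Z$ is a compact K\"ahler manifold of complex dimension $m$ with K\"ahler class $\omega$, and $\alpha_{1},\dots,\alpha_{k}$ are nef $(1,1)$-classes with $k\le m$, then
\[
\int_{Z}\alpha_{1}\wedge\cdots\wedge\alpha_{k}\wedge\omega^{\,m-k}\ \ge\ 0 .
\]
Indeed, for every $\epsilon>0$ each class $\alpha_{i}+\epsilon\omega$ lies in the open cone $\mathcal{K}_{Z}$, so $\prod_{i}(\alpha_{i}+\epsilon\omega)\wedge\omega^{\,m-k}$ is a product of K\"ahler classes and integrates to a strictly positive number; expanding and letting $\epsilon\to 0$ gives the claim. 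Applied to $Z=\pi^{-1}(Y)\subset\mathbb{P}(E)$ with the classes $\xi=c_{1}(\mathcal{O}_{\mathbb{P}(E)}(1))$ (nef precisely because $E$ is) and $\pi^{*}\omega_{X}$, this already proves the theorem for the Segre classes $s_{j}(E)=\pi_{*}\xi^{\,e-1+j}$, hence for the Schur polynomials supported on a single row or column.

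For a general partition $\lambda$ with $|\lambda|=r$ I would pass to the flag bundle $\tau:\mathrm{Fl}(E)\to X$ and invoke the Fulton--Lazarsfeld realization of Schur polynomials as pushforwards, writing $P_{\lambda}(c(E))=\tau_{*}\Theta$ for a class $\Theta$ assembled from the tautological quotient bundles $\tau^{*}E/V_{i}$ of the universal flag. Each such quotient is a quotient of the nef bundle $\tau^{*}E$, hence nef, so its determinant $c_{1}(\tau^{*}E/V_{i})$ is a nef $(1,1)$-class; the geometric content is that $\Theta$ can be taken to be a product of these nef classes. Granting this, the projection formula gives
\[
\int_{Y}P_{\lambda}(c(E))\wedge\omega_{X}^{\,d-r}
=\int_{\tau^{-1}(Y)}\Theta\wedge\tau^{*}\omega_{X}^{\,d-r},
\]
and the right--hand side is nonnegative by the basic lemma applied on the K\"ahler manifold $\tau^{-1}(Y)$, since it is an integral of a product of nef classes against a power of a K\"ahler class, in top degree.

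The main obstacle is this last step, namely the combinatorial--geometric identity together with the verification that $\Theta$ is genuinely a product of \emph{nef} classes. The individual Chern roots $a_{i}$ of $E$ on $\mathrm{Fl}(E)$ are \emph{not} nef, so the naive monomial $\prod a_{i}^{\lambda_{i}+e-i}$ whose pushforward is $P_{\lambda}$ will not suffice; the essential point is to re-express $P_{\lambda}$ through the nef determinantal classes $c_{1}(\tau^{*}E/V_{i})$, which is exactly where the Schur (rather than arbitrary) structure of $P_{\lambda}$ enters. A secondary difficulty is that $X$ is assumed only K\"ahler, not projective, so the original Fulton--Lazarsfeld argument via effective degeneracy loci of generic homomorphisms between ample bundles must be replaced by the transcendental limit argument above; this is precisely what allows nefness, as the closure of the K\"ahler cone, to play the role of ampleness.
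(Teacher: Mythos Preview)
The paper does not supply a proof of this statement; it is quoted from \cite{DPS} as input and used as a black box. Your outline is in fact the strategy of Demailly--Peternell--Schneider, who adapt the Fulton--Lazarsfeld flag-bundle argument to the K\"ahler category along exactly the lines you describe: stability of nefness under pullbacks and quotients, the $\epsilon$-perturbation lemma for products of nef $(1,1)$-classes, and a Gysin formula on a flag bundle expressing $P_{\lambda}(c(E))$ as the pushforward of a monomial in the nef first Chern classes of the tautological quotients.

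One small correction to the sketch: in your final step $\tau^{*}\omega_{X}$ is \emph{not} a K\"ahler class on $\tau^{-1}(Y)$, since it degenerates along the fibres, so you cannot literally invoke your basic lemma with $\omega=\tau^{*}\omega_{X}$. This is harmless: the integrand is already of top degree on $\tau^{-1}(Y)$, and the case $k=m$ of your lemma only requires every factor to be nef; the perturbation then uses an auxiliary K\"ahler class on the flag bundle rather than $\tau^{*}\omega_{X}$. As you yourself flag, the substantive gap is the combinatorial identity realizing $\Theta$ as a product of the nef determinantal classes $c_{1}(\tau^{*}E/V_{i})$ rather than of the non-nef Chern roots. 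This is precisely what \cite{DPS} supply (building on Fulton--Lazarsfeld), and it is indeed the point at which the Schur hypothesis on $P_{\lambda}$ is essential; without it the sketch is an accurate road map but not yet a proof.
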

Here we let $\deg c_{i}(E)=2i$ for $0\le i \le \rk E$ and the Schur polynomial $P_{\lambda}(c(E))$ of degree $2r$ is defined by $P_{\lambda}(c(E))=\det(c_{\lambda_{i}-i+j}(E))$ 
for each partition $\lambda=(\lambda_{1},\lambda_{2},\dots) \dashv r$ of a non-negative integer $r\le \dim Y$ with $\lambda_{k}\ge \lambda_{k+1}$ for all $k\in \N$.  

\begin{Ex} (\cite{Laz}, page 118) 
Let $X$ be a complex threefold and $E$ a vector bundle of $\rk E=3$, then
$$
P_{(1)}(c(E))=c_{1}(E), \ \ 
P_{(2)}(c(E))=c_{2}(E), \ \ 
P_{(1,1)}(c(E))=c_{1}(E)^{2}-c_{2}(E)
$$
$$
P_{(3)}(c(E))=c_{3}(E), \ \ \ 
P_{(2,1)}(c(E))=c_{1}(E)\cup c_{2}(E)-c_{3}(E),
$$
$$
P_{(1,1,1)}(c(E))=c_{1}(E)^{3}-2c_{1}(E)\cup c_{2}(E)+c_{3}(E). 
$$

\end{Ex}

\begin{Prop}
Let $X$ be a smooth projective threefold, $x,y \in \mathcal{K}_{X}\cap H^{2}(X,\Z)$ and assume $x$ is very ample, then the following inequalities hold. 
\begin{enumerate}
\item $8\mu(x,x,x)+2c_{2}(X)\cup x \ge 4\mu(c_{1}(X),x,x)+c_{3}(X)$
\item $64\mu(x,x,x)+4\mu(c_{1}(X),c_{1}(X),x)+4c_{2}(X)\cup x  +c_{3}(X)
           \\ \ \ \ge 32\mu(c_{1}(X),x,x)+c_{1}(X)\cup c_{2}(X)$
\item $80\mu(x,x,x)+10\mu(c_{1}(X),c_{1}(X),x)+2c_{1}(X)\cup c_{2}(X)
           \\ \ \ \ge 40\mu(c_{1}(X),x,x)+\mu(c_{1}(X),c_{1}(X),c_{1}(X))+10c_{2}(X)\cup x +c_{3}(X)$
\item $12\mu(x,x,y)+c_{2}(X)\cup y \ge 4\mu(c_{1}(X),x,y)$
\item $24 \mu(x,x,y) +\mu(c_{1}(X),c_{1}(X),y) \ge 8\mu(c_{1}(X),x,y)+c_{2}(X)\cup y$
\item $6\mu(x,y,y)\ge\mu(c_{1}(X),y,y)$
\end{enumerate}
\end{Prop}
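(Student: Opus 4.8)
The plan is to exhibit a single nef vector bundle on $X$ to which Theorem \ref{DPS} can be applied, and then to read off all six inequalities as the nonnegativity of the six integrals attached to the partitions of $r=1,2,3$. The bundle I would use is $E=\Omega^{1}_{X}\otimes \mathcal{O}_{X}(2x)=\Omega^{1}_{X}(2x)$, a vector bundle of rank $3$.

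First I would check that $E$ is nef, which I expect to be the crux of the argument; everything afterwards is formal. Since $x=H$ is very ample it defines an embedding $X\hookrightarrow \mathbb{P}^{N}$ with $\mathcal{O}_{X}(x)=\mathcal{O}_{\mathbb{P}^{N}}(1)|_{X}$. The bundle $\Omega^{1}_{\mathbb{P}^{N}}(2)$ is globally generated: twisting the Euler sequence by $\mathcal{O}(2)$ realises it as the kernel of $\mathcal{O}(1)^{\oplus(N+1)}\to \mathcal{O}(2)$, and the global sections $x_{j}e_{i}-x_{i}e_{j}$ visibly span every fibre. Hence its restriction $\Omega^{1}_{\mathbb{P}^{N}}(2)|_{X}$ is globally generated, and the conormal sequence twisted by $\mathcal{O}_{X}(2x)$ presents $\Omega^{1}_{X}(2x)$ as a quotient
$$
\Omega^{1}_{\mathbb{P}^{N}}(2)|_{X}\twoheadrightarrow \Omega^{1}_{X}(2x)\to 0,
$$
so $E=\Omega^{1}_{X}(2x)$ is globally generated, and in particular nef.

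Next I would compute the Chern classes of $E$ from $c_{k}(\Omega^{1}_{X})=(-1)^{k}c_{k}(X)$ and the twisting formula for a rank-$3$ bundle, obtaining
$$
c_{1}(E)=-c_{1}(X)+6x,\qquad c_{2}(E)=c_{2}(X)-4c_{1}(X)x+12x^{2},
$$
$$
c_{3}(E)=-c_{3}(X)+2c_{2}(X)x-4c_{1}(X)x^{2}+8x^{3},
$$
where products are taken in $H^{*}(X)$.

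Finally I would apply Theorem \ref{DPS} with $Y=X$ (so $d=3$) and the Schur polynomials listed in the Example. For $r=3$, where $\omega_{X}^{d-r}=1$, the three partitions $(3),(2,1),(1,1,1)$ give $\int_{X}c_{3}(E)\ge 0$, $\int_{X}\bigl(c_{1}(E)c_{2}(E)-c_{3}(E)\bigr)\ge 0$, and $\int_{X}\bigl(c_{1}(E)^{3}-2c_{1}(E)c_{2}(E)+c_{3}(E)\bigr)\ge 0$; expanding these with the Chern classes above reproduces inequalities (1), (2) and (3) respectively. For the remaining three, since $y\in \mathcal{K}_{X}$ I may take $\omega_{X}=y$ as the Kähler class: the partitions $(2)$ and $(1,1)$ with $r=2$ give $\int_{X}c_{2}(E)\,y\ge 0$ and $\int_{X}\bigl(c_{1}(E)^{2}-c_{2}(E)\bigr)y\ge 0$, which are (4) and (5), while the partition $(1)$ with $r=1$ gives $\int_{X}c_{1}(E)\,y^{2}\ge 0$, which is (6). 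Each of these is a one-line substitution once the Chern classes are in hand, so the only real work lies in the verification of nefness in the second step.
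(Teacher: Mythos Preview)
Your argument is correct and matches the paper's proof essentially line for line: both show that $\Omega_X^1(2H)$ is globally generated (hence nef) by realising $\Omega_{\mathbb{P}^N}^1(2)$ as globally generated via the Euler sequence and then passing to the quotient $\Omega_X^1(2H)$, and both then apply Theorem~\ref{DPS} to the six Schur polynomials for a rank-$3$ bundle. Your explicit Chern-class computations and identification of each Schur polynomial with its inequality simply spell out what the paper summarises as ``straightforward computation''.
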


\begin{proof}
The very ample divisor $x=H$ gives us an embedding $\Phi_{|H|}:X\rightarrow \mathbb{P}(V)$, where $V=H^{0}(X,\mathcal{O}_{X}(H))$.  
Using the Euler sequence and the Koszul complex, we obtain the following exact sequence of sheaves 
$$
0 \longrightarrow \Omega_{\mathbb{P}(V)}^{k+1} \longrightarrow \bigwedge^{k+1}V \otimes \mathcal{O}_{\mathbb{P}(V)}((-k-1)H) \longrightarrow  \Omega_{\mathbb{P}(V)}^{k}\longrightarrow 0 
$$
for each $1\le k \le \dim_{\C}V-1$. 
We see that $\Omega_{\mathbb{P}(V)}(2H)$ is a quotient of $\mathcal{O}_{\mathbb{P}(V)}^{\oplus\binom{\dim_{\C}V}{2}}$. 
The vector bundle $\Omega_{X}(2H)$ is then generated by global sections because it is a quotient of the globally generated vector bundle $\Omega_{\mathbb{P}(V)}|_{X}(2H)$. 
We hence conclude that $\Omega_{X}(2H)$ is a nef vector bundle. 
Applying Theorem \ref{DPS} (or rather the inequalities derived using the above example) to our nef vector bundle $\Omega_{X}(2H)$, straightforward computation shows the desired inequalities.  
\end{proof}

The above result (with appropriate modification) certainly carries over to complex manifolds of dimension other than $3$.

\begin{Cor} \label{FL} 
Let $X$ be a Calabi--Yau threefold, $x,y \in \mathcal{K}_{X}\cap H^{2}(X,\Z)$ and assume $x$ is very ample, then the following inequalities hold. 
\begin{enumerate}
\item $8\mu(x,x,x)+2c_{2}(X)\cup x \ge c_{3}(X)$ 
\item $64\mu(x,x,x)+4c_{2}(X)\cup x +c_{3}(X)\ge 0$
\item $80\mu(x,x,x)\ge 10c_{2}(X)\cup x+c_{3}(X)$
\item $24 \mu(x,x,y)\ge c_{2}(X)\cup y$
\end{enumerate}
\end{Cor}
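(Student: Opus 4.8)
The plan is to exploit the defining feature of a Calabi--Yau threefold, namely that its canonical bundle $K_{X}$ is trivial, and to specialise the six inequalities of the preceding Proposition accordingly. Since $c_{1}(X)=c_{1}(TX)=-c_{1}(K_{X})$ and $K_{X}$ is trivial, we have $c_{1}(X)=0$ in $H^{2}(X,\Z)$. Hence every term involving $c_{1}(X)$ --- that is, each occurrence of $\mu(c_{1}(X),\,\cdot\,,\,\cdot\,)$, $\mu(c_{1}(X),c_{1}(X),\,\cdot\,)$, $\mu(c_{1}(X),c_{1}(X),c_{1}(X))$ and $c_{1}(X)\cup c_{2}(X)$ --- vanishes identically.

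First I would substitute $c_{1}(X)=0$ into inequalities (1), (2), (3) and (5) of the Proposition in turn. In (1) the right-hand term $4\mu(c_{1}(X),x,x)$ disappears, giving Corollary inequality (1). In (2) the left-hand term $4\mu(c_{1}(X),c_{1}(X),x)$ drops out and the right-hand side $32\mu(c_{1}(X),x,x)+c_{1}(X)\cup c_{2}(X)$ collapses to $0$, giving Corollary inequality (2). Inequality (3) similarly sheds $10\mu(c_{1}(X),c_{1}(X),x)+2c_{1}(X)\cup c_{2}(X)$ on the left and $40\mu(c_{1}(X),x,x)+\mu(c_{1}(X),c_{1}(X),c_{1}(X))$ on the right, yielding Corollary inequality (3). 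Finally (5) loses $\mu(c_{1}(X),c_{1}(X),y)$ on the left and $8\mu(c_{1}(X),x,y)$ on the right, producing Corollary inequality (4).

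The real content --- the nefness of $\Omega_{X}(2H)$ together with the Fulton--Lazarsfeld positivity of Theorem \ref{DPS} --- has already been established in proving the Proposition, so no genuine obstacle remains here; this is a pure specialisation. The only item deserving a moment's attention is the bookkeeping: one must match each Corollary inequality to the correct source inequality, and observe that inequalities (4) and (6) of the Proposition degenerate under $c_{1}(X)=0$ to the trivial positivity statements $12\mu(x,x,y)+c_{2}(X)\cup y\ge 0$ and $6\mu(x,y,y)\ge 0$, which is why they are not recorded in the Corollary.
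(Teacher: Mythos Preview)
Your argument is correct and is precisely the intended one: the Corollary is obtained from the preceding Proposition by setting $c_{1}(X)=0$, with inequalities (1)--(3) and (5) of the Proposition specialising to (1)--(4) of the Corollary exactly as you describe. The paper does not spell this out, treating it as immediate, so your write-up in fact supplies more detail than the original.
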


In recent literature there has been some interest in finding practical bounds for topological invariants of Calabi--Yau threefolds.  
As is mentioned in the introduction, the standard Hilbert scheme theory assures 
that possible Chern classes of a polarized Calabi--Yau threefold $(X,H)$ are in principle bounded once we fix a triple intersection number $H^{3}=n \in \N$, 
but now that we have effective bounds for the Chern classes (with a bit of extra data for the second Chern class $c_{2}(X)$) as follows.  
Recall first that it is shown by K. Oguiso and T. Peternell \cite{OP} that we can always pass from an ample divisor $H$ on a Calabi--Yau threefold to a very ample one $10H$.   
Then the last inequality in Corollary \ref{FL} says that once we know the trilinear form $\mu$ on the ample cone $\mathcal{K}_{X}$ 
there are only finitely many possibilities for the linear function $c_{2}(X) : H^{2}(X,\Z) \rightarrow \Z$. 
We shall now give a simple explicit formula to give a range of the Euler characteristic $c_{3}(X)$ of a Calabi--Yau threefold $X$. 

\begin{Thm} \label{MAIN} 
Let $(X,H)$ be a very amply polarized Calabi--Yau threefold, i.e. $x=H$ is a very ample divisor on $X$.  
Then the following inequality holds:
$$
-36\mu(x,x,x)-80\le \frac{c_{3}(X)}{2}=h^{1,1}(X)-h^{2,1}(X)\le 6\mu(x,x,x)+40.
$$
Moreover, the above inequality can be sharpened by replacing the left hand side by $-80$, $-180$ and right hand side by $28$, $54$ when $\mu(x,x,x)=1,3$ respectively. 
\end{Thm}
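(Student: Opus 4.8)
The plan is to track the three integers $a=\mu(x,x,x)=H^{3}$, $b=c_{2}(X)\cup x$ and $e=c_{3}(X)$, and to bound $e$ by combining the Fulton--Lazarsfeld inequalities of Corollary \ref{FL}, which express $e$ linearly in $a$ and $b$, with the surface-theoretic upper bounds on $b$ coming from Proposition \ref{NC}. Since $x=H$ is very ample, the hypotheses of Proposition \ref{NC} (1) and (2) are automatically satisfied, so I may freely use the Castelnuovo-type bound $\tfrac12 b\le a+20$, that is $b\le 2a+40$. The identity $c_{3}(X)/2=h^{1,1}(X)-h^{2,1}(X)$ is just the Hodge decomposition for a Calabi--Yau threefold, so it remains only to sandwich $e$.

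For the upper bound I would feed $b\le 2a+40$ into Corollary \ref{FL} (1), which reads $e\le 8a+2b$; substituting gives
$$
e\le 8a+2(2a+40)=12a+80,
$$
whence $c_{3}(X)/2=e/2\le 6a+40=6\mu(x,x,x)+40$. For the lower bound I would instead use Corollary \ref{FL} (2), which rearranges to $e\ge -64a-4b$. As this expression decreases in $b$, the very same upper bound on $b$ controls it from below:
$$
e\ge -64a-4(2a+40)=-72a-160,
$$
so $c_{3}(X)/2\ge -36a-80=-36\mu(x,x,x)-80$. Thus both halves of the main inequality are consequences of a single upper bound on $b$, and any improvement of that bound sharpens the two sides simultaneously.

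This is exactly how I would obtain the sharpenings for the small odd values $\mu(x,x,x)=1,3$: the displayed constants $28,-80$ and $54,-180$ are precisely what the two computations above return from $b\le 24$ (when $a=1$) and $b\le 42$ (when $a=3$). For $a=3$ the improvement is immediate: since $a$ is odd, the Noether bound of Proposition \ref{NC} (1) applies with $C=15$, giving $\tfrac12 b\le 2a+15$, i.e. $b\le 4a+30=42$, which fed into Corollary \ref{FL} (1),(2) yields $e/2\le 54$ and $e/2\ge -180$. The case $a=1$, where $K_{S}^{2}=a=1$, is more delicate, since there one must establish the strictly stronger bound $b\le 24$, equivalently $p_{g}(S)\le 1$ via the relation $p_{g}(S)=\tfrac16 a+\tfrac{1}{12}b-1$, and no single ready-made inequality delivers this.

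The hard part will be this last case. Here one must return to the geometry of the general hyperplane section $S\in|H|$, which is canonically embedded: its canonical system is the restriction of $|H|$, by the exact sequence
$$
0\longrightarrow \mathcal{O}_{X}\longrightarrow \mathcal{O}_{X}(H)\longrightarrow \mathcal{O}_{S}(K_{S})\longrightarrow 0
$$
together with $H^{1}(X,\mathcal{O}_{X})=0$, so that $\deg\Phi_{|K_{S}|}(S)=K_{S}^{2}=a$ is very small for $a=1$. I expect the obstacle to be reconciling the several constraints that such an $S$ must satisfy at once: the Noether inequality, the requirement that the canonical map be birational onto a surface in $\mathbb{P}^{p_{g}-1}$ (which forces $p_{g}(S)$ to be not too small), and the congruence $2a+b\equiv 0\pmod{12}$ from Hirzebruch--Riemann--Roch. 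Together these pin down the admissible $p_{g}(S)$ for $a=1$, and carrying out this bookkeeping — rather than any new estimate — is where the work lies.
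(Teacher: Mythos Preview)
Your derivation of the general inequality and of the $\mu(x,x,x)=3$ sharpening is exactly the paper's argument: feed the Castelnuovo bound $b\le 2a+40$ from Proposition~\ref{NC}(2), respectively the odd Noether bound $b\le 4a+30$ from Proposition~\ref{NC}(1), into Corollary~\ref{FL}(1) and~(2).

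For $a=1$, however, you have overlooked a tool the paper explicitly invokes: Corollary~\ref{FL}(4). Setting $y=x$ there gives $c_{2}(X)\cup x\le 24\,\mu(x,x,x)$, i.e.\ $b\le 24a$; for $a=1$ this is precisely the inequality $b\le 24$ you were seeking, and substituting it into Corollary~\ref{FL}(1),(2) yields $e/2\le 28$ and $e/2\ge -80$ at once. No return to the hyperplane section $S$ is required. Your proposed surface route, if pushed through, would in fact show the case is vacuous rather than produce $b\le 24$ directly: since $H$ is very ample, $|K_{S}|$ would embed $S$ as a surface of degree $K_{S}^{2}=1$ in projective space, hence as a plane, which is impossible for a surface of general type. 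That is a legitimate but roundabout way to the conclusion; the paper's use of Corollary~\ref{FL}(4) is the direct one.
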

\begin{proof}
This is readily proved by combining Proposition \ref{NC} (1), (2) and Corollary \ref{FL} (1), (2), (4).  
\end{proof}

The smallest and largest known Euler characteristics $c_{3}(X)$ of a Calabi--Yau threefold $X$ are $-960$ and $960$ respectively. 
Our formula may replace the question of finding a range of $c_{3}(X)$ by that of estimating the value $\mu(x,x,x)$ for an ample class $x \in \mathcal{K}_{X} \cap H^{2}(X,\Z)$.

\section{Quadratic forms associated with special cubic forms}

In this section we further study the cubic form $\mu(x,x,x):H^{2}(X,\Z)\rightarrow \Z$ for a K\"{a}hler threefold $X$, assuming that $\mu(x,x,x)$ has a linear factor over $\R$.  
We will see that the linear factor and the residual quadratic form are not independent. 
Possible signatures of the residual quadratic form are also determined under a certain condition. 
If the second Betti number $b_{2}(X)>3$, the residual quadratic form may endow the second cohomology $H^{2}(X,\Z)$ mod torsion with a lattice structure. \\

We start with fixing our notation. 
Let $\xi:V\rightarrow \R$ be a real quadratic form. 
Once we fix a basis of the $\R$-vector space $V$, $\xi$ may be represented as $\xi(x)=x^{t}A_{\xi}x$ for some symmetric matrix $A_{\xi}$. 
The signature of a quadratic form $\xi$ is a triple $(s_{+},s_{0},s_{-})$ 
where $s_{0}$ is the number of zero eigenvalues of $A_{\xi}$ and $s_{+}$ $(s_{-})$ is the number of positive (negative) eigenvalues of $A_{\xi}$. 
$A_{\xi}$ also defines a linear map $A_{\xi}:V\rightarrow V^{\vee}$ (or a symmetric bilinear form $A_{\xi}:V^{\otimes2}\rightarrow \R$).  
The quadratic form $\xi$ is called (non-)degenerate if $\dim_{\R} Ker(A_{\xi})>0 \ (=0)$. 
We say that $\xi$ is definite if it is non-degenerate and either $s_{+}$ or $s_{-}$ is zero, and indefinite otherwise. \\

Let $X$ be a K\"{a}hler threefold and assume that its cubic form $\mu(x,x,x)$ factors as $\mu(x,x,x)=\nu(x)\xi(x)$, where $\nu$ is linear and $\xi$ is quadratic map $H^{2}(X,\R) \rightarrow \R$. 
We can always choose the linear form $\nu$ so that it is positive on the K\"{a}hler cone $\mathcal{K}_{X}$. 
It is proven (see the proof of Lemma 4.3 in \cite{Wil}) that there exists a non-zero point on the quadric $Q_{\xi}=\{x\in H^{2}(X,\R) \ | \ \xi(x)=0\}$ and hence $\xi$ is indefinite 
provided that the irregularity $q(X)=\dim_{\C}H^{1}(X,\mathcal{O}_{X})=0$ and the second Betti number $b_{2}(X)>3$.  

\begin{Prop}
Let $X$ be a K\"{a}hler threefold. 
Assume that the trilinear form $\mu(x,x,x)$ decomposes as $\nu(x)\xi(x)$ over $\R$ (if the quadratic form is not a product of linear forms, then we may work 
over $\Q$)
and the linear form $\nu$ is positive on the K\"{a}hler cone $\mathcal{K}_{X}$. 
Then the following hold. 
\begin{enumerate}
\item $\dim_{\R} Ker(A_{\xi})\le 1$.   
If $\xi$ is a degenerate quadratic form, its restriction $\xi|_{H_{\nu}}$ to the hyperplane $H_{\nu}=\{x \in H^{2}(X,\R)\ | \ \nu(x)=0\}$ is non-degenerate. 
\item If the irregularity $q(X)=0$ (for example a Calabi--Yau threefold), then the signature of $\xi$ is either $(2,0,b_{2}(X)-2)$, $(1,1,b_{2}(X)-2)$ or $(1,0,b_{2}(X)-1)$. 
\item The above three signatures are realized by some Calabi--Yau threefolds with $b_{2}(X)=2$.  
\end{enumerate}
\end{Prop}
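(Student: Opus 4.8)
The plan is to linearise the factorisation and read everything off from one polarised identity. Writing $A_\xi(\cdot,\cdot)$ for the symmetric bilinear form polarising $\xi$ and polarising $\mu(x,x,x)=\nu(x)\xi(x)$, one gets
\[ \mu(x,y,z)=\tfrac13\bigl(\nu(x)A_\xi(y,z)+\nu(y)A_\xi(x,z)+\nu(z)A_\xi(x,y)\bigr). \]
Fix a Kähler class $\omega\in\mathcal{K}_X$, set $c=\nu(\omega)>0$ and $B_\omega(x,y)=\mu(\omega,x,y)$. By the Hard Lefschetz theorem $B_\omega$ is non-degenerate on $H^{2}(X,\R)$. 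The displayed formula yields the two facts I would build on: a global rank-$\le 2$ relation $3B_\omega=cA_\xi+\nu\otimes\ell+\ell\otimes\nu$ with $\ell=A_\xi(\omega,\cdot)$, and, crucially, the restriction identity $B_\omega|_{H_\nu}=\tfrac{c}{3}\,\xi|_{H_\nu}$ (both off-diagonal terms vanish on $H_\nu$). Hence $\xi|_{H_\nu}$ and $B_\omega|_{H_\nu}$ have the same signature.

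For (1): let $w\in\Ker(A_\xi)$. Then $\ell(w)=A_\xi(\omega,w)=0$, and plugging $w$ into the identity collapses it to $3B_\omega(w,\cdot)=\nu(w)\,\ell$. If $\dim\Ker(A_\xi)\ge 2$, take independent $w_1,w_2$ in the kernel; the combination $w=\nu(w_2)w_1-\nu(w_1)w_2$ (or $w_1$ itself if $\nu(w_1)=\nu(w_2)=0$) is a nonzero vector with $B_\omega(w,\cdot)\equiv 0$, contradicting non-degeneracy of $B_\omega$. Thus $\dim\Ker(A_\xi)\le 1$. If $\xi$ is degenerate the kernel is a line $\R w$; non-degeneracy of $B_\omega$ forces $\nu(w)\ne 0$, so $w\notin H_\nu$ and $H^{2}(X,\R)=H_\nu\oplus\R w$. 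Since $w\in\Ker(A_\xi)$ is $\xi$-orthogonal to everything and $\xi(w)=0$, the form splits as $\xi=\xi|_{H_\nu}\oplus 0$, so $\xi|_{H_\nu}$ is non-degenerate.

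For (2): first $\xi(\omega)=\mu(\omega,\omega,\omega)/\nu(\omega)=(\int_X\omega^{3})/c>0$, so $s_+\ge 1$. The decisive input is the signature of $B_\omega$: by the Hodge--Riemann bilinear relations $B_\omega$ has signature $(1+2h^{2,0}(X),0,h^{1,1}(X)-1)$, so under the hypothesis (for a Calabi--Yau threefold $h^{2,0}=0$) it is Lorentzian, of type $(1,0,n-1)$ with $n=b_2(X)$. Consequently $\xi|_{H_\nu}$, a codimension-one restriction of a Lorentzian form, has $s_+\le 1$. Combining with (1) and eigenvalue (Cauchy) interlacing: if $\xi$ is non-degenerate then $s_+(\xi)\le s_+(\xi|_{H_\nu})+1\le 2$, giving $(1,0,n-1)$ or $(2,0,n-2)$; if $\xi$ is degenerate then $\xi=\xi|_{H_\nu}\oplus 0$ with $\xi|_{H_\nu}$ non-degenerate, forcing $s_+(\xi|_{H_\nu})=1$ (it is $\ge 1$ since $s_+(\xi)\ge 1$ and $\le 1$ as above), giving $(1,1,n-2)$. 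This is exactly the stated list, and it shows why $(2,1,n-3)$ is impossible. \emph{The main obstacle is precisely this Hodge-theoretic step}: the entire enumeration rests on $B_\omega$ having a single positive eigenvalue, so one must justify the Lorentzian signature under the stated hypothesis (equivalently $h^{2,0}(X)=0$), which is where $q(X)=0$ and the Calabi--Yau condition enter.

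For (3): when $n=b_2(X)=2$ the three signatures read $(2,0,0)$, $(1,1,0)$, $(1,0,1)$, i.e. $\xi$ positive definite, rank-one degenerate, or indefinite. On a Picard-number-two Calabi--Yau threefold $\mu(x,x,x)$ is a binary cubic, and after extracting the linear factor $\nu$ the residual quadratic realises these three types according as the remaining two linear factors form a complex-conjugate pair, a repeated real factor, or two distinct real factors. I would exhibit one example of each by computing triple intersection numbers: a smooth $(3,3)$-divisor $X\subset\mathbb{P}^2\times\mathbb{P}^2$ has $\mu(aH_1+bH_2)=9ab(a+b)$, three distinct real factors, realising the indefinite case $(1,0,1)$; and two further known Picard-number-two examples, whose cubics carry respectively an irreducible real quadratic factor and a repeated linear factor, realise $(2,0,0)$ and $(1,1,0)$. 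Here the only genuine work is locating or constructing the two remaining explicit threefolds with the prescribed intersection forms.
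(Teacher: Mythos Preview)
Your arguments for (1) and (2) are correct and take a cleaner, more invariant route than the paper's. The paper proves (1) by observing via Hard Lefschetz that the cubic $\mu(x,x,x)$ genuinely depends on all $b_2(X)$ variables, so $\xi$ can lose at most one; for (2) it picks an ample class $L_1$ with $\mu(L_1,L_1,L_1)=1$, uses the Hodge index theorem to get a Lorentzian $b_{\mu,L_1}$, and then builds an explicit basis $L_1,\dots,L_{b_2}$ adapted to $H_\nu$ in order to write $\mu(x,x,x)=a_1(a_1^2-\sum_{i=2}^{b_2-1}a_i^2+Ca_1a_{b_2}+Da_{b_2}^2)$ and read off the possible signatures. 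Your polarisation identity $3\mu(x,y,z)=\nu(x)A_\xi(y,z)+\nu(y)A_\xi(x,z)+\nu(z)A_\xi(x,y)$ does the same work without coordinates: it immediately gives $B_\omega|_{H_\nu}=\tfrac{c}{3}\,\xi|_{H_\nu}$, which transfers the Lorentzian bound $s_+\le 1$ from $B_\omega|_{H_\nu}$ to $\xi|_{H_\nu}$, and then interlacing finishes. This is a genuine simplification; the paper's basis construction is essentially rediscovering your restriction identity by hand. You are also right to flag that the Lorentzian signature of $B_\omega$ on all of $H^2(X,\R)$ really uses $h^{2,0}(X)=0$, which for a Calabi--Yau threefold (with the paper's definition $H^1(X,\mathcal{O}_X)=0$) follows from Serre duality; the paper invokes the same fact via the identification $H^2(X,\R)\cong NS(X)\otimes\R$.

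The one real gap is in (3). The paper does not argue abstractly that all three types of binary quadratics must occur; it simply exhibits named examples: the weighted hypersurface $X_7^{II}(1,1,1,2,2)^2_{-186}$ with cubic $a_1(14a_1^2+21a_1a_2+9a_2^2)$ realises $(2,0,0)$, and the complete intersection $(\mathbb{P}^3\times\mathbb{P}^1)\cap(4,2)$ with cubic $2a_1^3+12a_1^2a_2=2a_1\cdot a_1(a_1+6a_2)=2a_1^2(a_1+6a_2)$ realises both $(1,0,1)$ and $(1,1,0)$ depending on which linear factor you call $\nu$. Your $(3,3)$ hypersurface in $\mathbb{P}^2\times\mathbb{P}^2$ is a perfectly good replacement for the indefinite case, but ``two further known examples'' is not a proof: you must actually name threefolds and compute their cubics to cover $(2,0,0)$ and $(1,1,0)$. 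Until you do, part (3) is incomplete.
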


\begin{proof}
(1) Let $\omega_{X} \in \mathcal{K}_{X}$ be a K\"{a}hler class. 
The Hard Lefschetz theorem states that the map $H^{2}(X,\R)\rightarrow H^{4}(X,\R)$ defined by $\alpha \mapsto \omega_{X} \cup \alpha$ is an isomorphism.
Hence the cubic form $\mu(x,x,x)$ depends on exactly $b_{2}(X)$ variables. 
Then the quadratic form $\xi$ must depend on at least $b_{2}(X)-1$ variables and thus we have $\dim_{\R}(Ker(A_{\xi}))\le 1$. 
Assume next that the quadratic form $\xi$ is degenerate. 
Then the linear form $\nu$ is not the zero form on $Ker(A_{\xi})$ (otherwise $\mu(x,x,x)$ depends on less than $b_{2}(X)$ variables).  
The restriction $\xi|_{H_{\nu}}$ is non-degenerate because $H^{2}(X,\R)=H_{\nu} \oplus Ker(A_{\xi})$ as a $\R$-vector space. \\
(2) Let $L_{1} \in \mathcal{K}_{X} \cap H^{2}(X,\R)$ be an ample class such that $\mu(L_{1},L_{1},L_{1})=1$.  
Since the K\"{a}hler cone $\mathcal{K}_{X}\subset H^{2}(X,\R)$ is an open cone, $X$ is projective by the Kodaira embedding theorem. 
Then the Hodge index theorem states 
that the symmetric bilinear form $b_{\mu,L_{1}}=\mu(L_{1},*,**):H^{2}(X,\R)^{\otimes 2}\cong (NS(X)\otimes \R)^{\otimes 2} \rightarrow \R$ has signature $(1,0,b_{2}(X)-1)$, 
where $NS(X)$ is the Neron--Severi group of $X$. 
Note that $\dim_{\R} (L_{1} ^{\perp} \cap H_{\nu}) \ge b_{2}(X) -2$, 
where $L_1^{\perp}$ denotes the orthogonal space to $L_{1}$ with respect to the non-degenerate bilinear form $b_{\mu , L_1}$.   
We then have two cases; the first is when $\dim_{\R} (L_{1} ^{\perp} \cap H_{\nu})=b_2(X) -1$ (i.e. $L_{1}^{\perp} = H_{\nu}$). 
In this case we can write down a basis $L_{2}, \ldots , L_{b_{2}(X)}$ for the subspace $H_{\nu}$ which diagonalizes the quadratic form $b_{\mu , L_{1}}|_{H_{\nu}}$,  
and hence (noting that $L_{1} \not\in H_{\nu})$ the Gramian matrix of $b_{\mu , L_{1}}$ 
with respect to the basis $L_{1}, \ldots , L_{b_{2}(X)}$ of $H^2(X, \R)$ is $A_{b_{\mu,L_{1}}}=(b_{\mu,L_{1}}(L_{i},L_{j}))=diag(1,-1,\dots,-1)$. 
If $\dim_{\R} (L_{1} ^{\perp} \cap H_{\nu})=b_{2}(X)-2$, then we can write down a basis $L_{2} , \ldots, L_{b_{2}(X)-1}$ 
for the subspace $L_{1} ^{\perp} \cap H_{\nu}$ which diagonalizes the quadratic form $b_{\mu , L_{1}}|_{L_{1} ^{\perp} \cap H_{\nu}}$, 
and then extend it to a basis $L_2, \ldots , L_{b_2(X)}$ of $H_\nu$.  
Thus in both cases $L_1 , \ldots ,L_{b_2(X)}$ is a basis for $H^2 (X, \R)$; 
the corresponding matrix $A_{b_\mu ,L_1}$ will not be diagonal in this second case, 
but the first $(b_{2}(X) -1)$-principal minor is, with one $+1$ and $b_{2}(X)-2$ entries $-1$ on the diagonal.

Let us define a new basis $\{M_{i}\}_{i=1}^{b_{2}(X)}$ of $H^{2}(X,\R)$ by setting $M_{i}=L_{i}$ for $1\le i \le b_{2}(X)-1$ and  
$$
M_{b_{2}(X)}=L_{b_{2}(X)}+\sum_{i=2}^{b_{2}(X)}b_{\mu,L_{1}}(L_{i},L_{b_{2}(X)})L_{i} \in H_{\nu}. 
$$ 
Let $x=\sum_{i=1}^{b_2{(X)}} a_{i}M_{i}$. 
Then the hyperplane $H_{\nu}$ is defined by the equation $a_{1}=0$ and the K\"{a}hler cone $\mathcal{K}_{X}$ lies on the side where $a_{1}>0$ by the assumption on $\nu$.  
Therefore we have 
$$
\mu(x,x,x)=a_{1}(a_{1}^{2}-\sum_{i=2}^{b_{2}(X)-1}a_{i}^{2}+Ca_{1}a_{b_{2}(X)}+Da_{b_{2}(X)}^{2})
$$
for some (explicit) constants $C,D \in \R$.
Since the quadratic form is positive on the the K\"{a}hler cone $\mathcal{K}_{X}$, there must be at least one positive eigenvalue 
and hence possible signatures are $(2,0,b_{2}(X)-2)$, $(1,1,b_{2}(X)-2)$ and $(1,0,b_{2}(X)-1)$. \\
(3) Consider a Calabi--Yau threefold $X_{7}^{II}(1,1,1,2,2)^{2}_{-186}$ from page 575 \cite{HLY} 
given as a resolution of a degree $7$ hypersurface in the weighted projective space $\mathbb{P}_{(1,1,1,2,2)}$. 
Its cubic form is given by $a_{1}(14a_{1}^{2} +21a_{1}a_{2}+9a_{2}^{2})$, whose quadratic form has signature $(2,0,0)$. 
The cubic form of a hypersurface Calabi--Yau threefold $(\mathbb{P}^{3}\times \mathbb{P}^{1})\cap(4,2)$ is $2a_{1}^{3}+12a_{1}^{2}a_{2}$,  
whose quadratic form has signature either $(1,0,1)$ or $(1,1,0)$, depending on its decomposition. 
\end{proof}

The restriction $\xi|_{H_{\nu}}$ may be degenerate if $\xi$ is non-degenerate. 
The cubic form of the above Calabi--Yau threefold $(\mathbb{P}^{3}\times \mathbb{P}^{1})\cap(4,2)$ gives an example of such phenomenon. 
Let $\nu(a)=2a_{1}$ and $\xi(a)=a_{1}(a_{1}+6a_{2})$. 
Then $\xi$ is hyperbolic and non-degenerate, but its restriction to $H_{\nu}$ is trivial. \\

Let $X$ be a K\"{a}hler threefold. 
If $b_{2}(X)>3$, the cubic form $\mu$ cannot consist of three linear factors over $\R$ and hence if $\mu$ contains a linear factor it must be rational (see also the comment after Lemma 4.2 \cite{Wil}).  
Hence an appropriate scalar multiple of $\xi$ endows the second cohomology $H^{2}(X,\Z)$ mod torsion with a lattice structure. 

\begin{Ex}
Let $X$ be a K3 surface and $\iota_{S}$ an involution of $S$ without fixed point (hence the quotient $S/\langle \iota_{S}\rangle$ is an Enriques surface).  
Let $E$ be an elliptic curve with the canonical involution $\iota_{E}$. 
Then we can define a new involution $\iota$ of $S\times E$ given by  $\iota=(\iota_{S},\iota_{E})$. 
The quotient $X=(S\times E)/\langle \iota \rangle$ is a Calabi--Yau threefold with $b_{2}(X)=11$. 
The cubic form $\mu(x,x,x$) of $X$ has a linear factor (which, we assume, is positive on the  K\"{a}hler cone $\mathcal{K}_{X}$) and the residual quadratic form $\xi$ has signature $(1,1,9)$. 
More precisely, the lattice structure on $H^{2}(X,\Z)$ mod torsion associated with appropriate $\xi$ is given by $U\oplus E_{8}(-1)\oplus \langle 0\rangle $, 
where $U$ is the hyperbolic lattice, $E_{8}(-1)$ is the root lattice of type $E_{8}$ multiplied by $-1$ and $\langle 0\rangle$ is a trivial lattice of rank $1$.  
\end{Ex}

\begin{Prop}
Let $G$ be a finite group acting on a K\"{a}hler threefold $X$ and $\phi:G\rightarrow GL(H^{2}(X,\Z))$ the induced representation.  
Assume that the trilinear form decomposes $\mu(x,x,x)=\nu(x)\xi(x)$ as above. 
Then the image of $\phi:G\rightarrow GL(H^{2}(X,\Z))$ lies in the orthogonal group $O(\xi)$ associated with the quadratic form $\xi$.   
\end{Prop}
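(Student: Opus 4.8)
The plan is to exploit two structural features of the $G$-action: that it preserves the cup-product form $\mu$, and that it preserves the K\"{a}hler cone $\mathcal{K}_{X}$. Since $G$ acts by biholomorphic automorphisms, each $\phi(g)$ is an orientation-preserving diffeomorphism, so naturality of the cup product gives $\mu(\phi(g)x,\phi(g)y,\phi(g)z)=\mu(x,y,z)$ for all $x,y,z$; in particular the cubic form $c(x):=\mu(x,x,x)$ is $\phi(G)$-invariant. Likewise, because a biholomorphism sends K\"{a}hler classes to K\"{a}hler classes, $\phi(g)\mathcal{K}_{X}=\mathcal{K}_{X}$ for every $g\in G$. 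These are the only two geometric inputs I will use.

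First I would transport the factorization $c=\nu\xi$ by $\phi(g)$. Writing $\nu_{g}:=\nu\circ\phi(g)^{-1}$ and $\xi_{g}:=\xi\circ\phi(g)^{-1}$, invariance of $c$ gives $c=c\circ\phi(g)^{-1}=\nu_{g}\,\xi_{g}$, so $\nu_{g}$ is again a linear factor of $c$. The key step is to argue that the linear factor of $c$ is unique up to a nonzero scalar, so that $\nu_{g}=\lambda_{g}\nu$ for some $\lambda_{g}\in\R^{\times}$ and consequently $\xi_{g}=\lambda_{g}^{-1}\xi$. This uniqueness holds whenever the residual form $\xi$ does not split off a further real linear factor; in particular, as noted just before the proposition, when $b_{2}(X)>3$ the cubic $c$ cannot be a product of three real linear forms, so its linear factor is unique up to scalar.

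Next I would pin down the scalar $\lambda_{g}$. Using $\phi(gh)^{-1}=\phi(h)^{-1}\phi(g)^{-1}$ one checks $\nu_{gh}=\lambda_{g}\lambda_{h}\,\nu$, so $g\mapsto\lambda_{g}$ is a homomorphism $G\to\R^{\times}$. Evaluating $\nu_{g}=\lambda_{g}\nu$ on a K\"{a}hler class $\omega\in\mathcal{K}_{X}$, and using that $\phi(g)^{-1}\omega\in\mathcal{K}_{X}$ together with the positivity of $\nu$ on $\mathcal{K}_{X}$, forces $\lambda_{g}>0$. Since $G$ is finite, $\lambda_{g}$ is a root of unity, and the only positive real root of unity is $1$. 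Hence $\xi_{g}=\xi$, i.e. $\xi(\phi(g)x)=\xi(x)$ for all $x$, which is exactly the statement that $\phi(g)\in O(\xi)$.

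The main obstacle is the uniqueness of the linear factor invoked in the second paragraph. When $\xi$ is irreducible over $\R$ this is immediate from unique factorization of real polynomials, and the geometric input (Hard Lefschetz forcing $c$ to depend on all $b_{2}(X)$ variables, hence ruling out a complete split when $b_{2}(X)>3$) disposes of the degenerate situation in the range of interest. In the remaining low-$b_{2}$ cases, where $c$ could conceivably split into three linear forms, one would need to check separately that $\phi(G)$ cannot permute these factors in a way that moves the distinguished factor $\nu$; here the normalization that $\nu$ be positive on $\mathcal{K}_{X}$, a condition preserved by $\phi(G)$, provides precisely the rigidity needed to keep $\nu$ (and therefore $\xi$) fixed.
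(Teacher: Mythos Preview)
Your argument is correct and parallels the paper's in reducing to the $G$-invariance of $\nu$; both rely (you explicitly, the paper tacitly) on uniqueness of the linear factor of $\mu$ up to scalar to obtain $\nu\circ\phi(g)^{-1}=\lambda_g\,\nu$. The methods diverge in how the scalar is pinned down. The paper averages a K\"{a}hler class over $G$ to produce a $G$-fixed $x\in\mathcal{K}_X$, records the splitting $H^2(X,\R)=\R x\oplus H_\nu$ into $G$-subrepresentations, and then reads off $\lambda_g=1$ immediately from $\nu(\phi(g)x)=\nu(x)$. You instead observe that $g\mapsto\lambda_g$ is a homomorphism $G\to\R^\times$, positive because $\phi(G)$ preserves $\mathcal{K}_X$, and hence trivial since $G$ is finite. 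Your route is more transparent about where uniqueness of the linear factor enters and avoids the averaging trick; the paper's route is a one-line evaluation once the fixed point is in hand, but leaves the $G$-stability of $H_\nu$ (equivalently, the uniqueness step) implicit. Your closing remark on the fully split case is honest but not quite airtight: if $\mu=\ell_1\ell_2\ell_3$ and $G$ cycles the $\ell_i$, each $\ell_i$ is then positive on $\mathcal{K}_X$, so positivity alone does not single out $\nu$. The paper's proof carries the same tacit restriction, so this is not a defect of your argument relative to theirs.
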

\begin{proof}
Since the cubic form $\mu:H^{2}(X,\R)\rightarrow \R$ is invariant under $G$, it is enough to show that the linear form $\nu$ is invariant under $G$. 
There exists $x\in \mathcal{K}_{X}$ such that $\R x$ is a trivial representation of $G$ (by averaging a K\"{a}hler class over $G$) 
and then the representation $\phi$ is a direct sum of two subrepresentations $\R x\oplus H_{\nu}$. 
Since $\nu$ is a linear form, this shows the invariance of $\nu$ under $G$.  
\end{proof}

This proposition may be useful to study group actions on the cohomology group $H^{2}(X,\Z)$.

\subsection*{Acknowledgement}
The present work was initiated during the the first author's stay at the Workshop on Arithmetic and Geometry of K3 surfaces and Calabi--Yau threefolds, August 16-25, 2011. 
He is grateful to the organizers of the workshop and Fields Institute for their warm hospitality and partial travel support.

\par\noindent{\scshape \small
Department of Mathematics, University of British Columbia \\
51984 Mathematics Rd , Vancouver, BC, V6T 1Z2, CANADA.}
\par\noindent{\ttfamily kanazawa@math.ubc.ca}
\break
\par\noindent{\scshape \small
Department of Pure Mathematics, University of Cambridge \\
16 Wilberforce Road, Cambridge CB3 1SB, UK.}
\par\noindent{\ttfamily pmhw@dpmms.cam.ac.uk}

\end{document}